\theoremstyle{definition}
\newtheorem{theorem}{Theorem}
\newtheorem{prop}{Proposition}
\newtheorem{rmk}{Remark}
\newtheorem{lem}{Lemma}
\numberwithin{defi}{section} 
\numberwithin{prop}{section} 
\numberwithin{theorem}{section}  
\numberwithin{lem}{section}
\numberwithin{equation}{section} 
\numberwithin{rmk}{section}
\title[On the weakest conditions for a Kannan and Chatterjea]{On the weakest conditions for the existence of fixed points of Kannan and Chatterjea type contractions}
\author[S. Hashimoto]{Shunya Hashimoto}
\address{Faculty of Science, Kyoto University,
\newline\indent
Oiwake-tyou Kitashirakawa, Sakyo-ku Kyoto-shi, 606-8502, Japan}
\email{hashimoto.shunya.7m@kyoto-u.ac.jp}
\author[M. Kikkawa]{Misako Kikkawa}
\address{Department of Mathematics, Faculty of Science, Saitama University,
\newline\indent
Shimo-Okubo 255, Sakura-ku Saitama-shi, 338-8570, Japan}
\email{misakokikkawa0731@gmail.com}
\author[S. Machihara]{Shuji Machihara}
\address{Department of Mathematics, Faculty of Science, Saitama University,
\newline\indent
Shimo-Okubo 255, Sakura-ku Saitama-shi, 338-8570, Japan}
\email{machihara@rimath.saitama-u.ac.jp}
\author[A. Saghir]{Aqib Saghir}
\address{Department of Mathematics, Faculty of Science, Saitama University,
\newline\indent
Shimo-Okubo 255, Sakura-ku Saitama-shi, 338-8570, Japan}
\email{saghir.a.460@ms.saitama-u.ac.jp}
\thanks{
This work was supported by JST CREST, Japan, Grant Number JPMJCR24Q6.
}
\subjclass[2020]{54H25, 47H10}
\keywords{fixed point theorem, Kannan-type contraction, Chatterjea-type contraction}
\begin{document}
\begin{abstract}
In this paper, we study the weakest possible conditions for fixed point theorems involving two classes of mappings defined by Kannan and Chatterjea. Our approach relies on the so-called CJM condition, which was originally introduced by \'Ciri\'c \cite{Ciric}, and later, Suzuki \cite{Suzuki} showed that the CJM condition is necessary for the existence of fixed points and the convergence of all Picard sequences of Banach type mappings. Our aim is to extend Suzuki's approach to the case of Kannan and Chatterjea mappings. 
In particular, in the first case, we discuss the equivalence of previously known conditions and establish that our conditions are optimal for ensuring that all Picard sequences converge to a fixed point of a mapping.
\end{abstract}

\maketitle

\date{}
\maketitle

\section{Introduction}

Fixed-point theory is an important area in nonlinear analysis, providing foundational results for solving various mathematical and applied problems. Among the classical contractive conditions, Kannan-type mappings have been widely studied due to their unique properties and applications. In 1968, Kannan \cite{Kannan} introduced a contraction condition that does not require strict continuity like Banach \cite{Banach} contractions. 
Throughout this paper, let $X$ be a nonempty set and $(X,d)$ be a metric space.
A mapping \( T: X \to X \) is called a Kannan contraction if there exists a constant 
\( \alpha \in \left[ 0, \frac{1}{2} \right) \) such that:
\[d(Tx, Ty) \leq \alpha \left( d(x, Tx) + d(y, Ty) \right), \text{for all} \ x,y \in X.\]
Several authors \cite{Ariza, Branciari, Huang, Khan, Kuczma, Regan, Reich1, Reich2, Roy} have studied generalizations of Kannan mappings, including integral-type conditions and applications in various spaces. 

Also, Chatterjea \cite{Cha} showed the existence of a fixed point under a contraction mapping of a Chatterjea type with a structure similar to that of a Kannan-type contraction mapping.
A mapping \( T: X \to X \) in a metric space \( (X, d) \) is called a Chatterjea contraction if there exists a constant 
\( \alpha \in \left[ 0, \frac{1}{2} \right) \) such that:
\[d(Tx, Ty) \leq \alpha \left( d(x, Ty) + d(y, Tx) \right), \text{for all} \ x,y \in X.\]

Subrahmanyam \cite{S74} showed that Kannan and Chatterjea type contraction mappings, unlike Banach contraction mappings, give a characterization of the completeness of the metric space.
In other words, it is equivalent that every Kannan-type contraction mapping on the metric space $(X,d)$ has a fixed point and that the metric space is complete. The same is true for Chatterjea contractions.
In this sense, Kannan and Chatterjea type contraction mappings are natural contraction mappings mathematically.

Furthermore, Kannan-type contraction mappings have a convergence condition based on self-difference, so they are expected to have applications in physics, engineering and other areas. 
In fact, Kannan-type contraction mapping was used in \cite{YSP19} to show the existence of solutions of nonlinear partial differential equations describing damped spring-mass systems and deformations of elastic beams. 
And further, graph theory is used and is compatible and works synergistically with Kannan-type contraction mapping. 
We may expect that Kannan-type contraction mapping may also have applications in data science, which deals with network models depending on the distance between inputs and outputs. 
Here we review the development of the weakest conditions especially of Banach type mapping under which admits a fixed point, and then explain what the CJM condition is.

In 1974, L. B. Ćirić \cite{Ciric} introduced the generalization of Banach's contraction principle, which laid the groundwork for what would later become known as CJM contractions. 
Building on this foundation, J. Matkowski \cite{Matkowski} contributed significantly to the development of the CJM framework in 1980 by studying fixed point theorems for contractive mappings in metric spaces, solidifying the theoretical basis for this class of contractions. 
Finally, J. Jachymski \cite{Jachymski} provided a comprehensive study of contractive conditions in 1995, establishing equivalent conditions for Meir-Keeler type contractions \cite{Meir} and further advancing the understanding of generalized contractions. 
CJM contractions provide a powerful and unified framework for analyzing fixed point theorems in metric spaces. 
Specifically, a mapping \( T \) on a metric space \( (X, d) \) satisfies the CJM condition for Banach type mapping if the following holds for all \( x, y \in X \):
\begin{enumerate}
\item[(i)] For every $\varepsilon>0$, there exists $\delta>0$ such that $d(x, y)<\varepsilon+\delta$ implies $d(Tx,Ty)\le \varepsilon$.
\item[(ii)] $x\not=y$ implies $d(Tx, Ty)<d(x, y)$.
\end{enumerate}
The CJM condition in the Banach-type fixed point theorem is nearly the weakest possible. 

A modified version of this condition, which is the weakest, was introduced by Suzuki \cite{Suzuki} (see Remark \ref{rmk3.1}) who showed that this weakest condition is equivalent to the existence of a fixed point of a mapping and the convergence of all its Picard sequences to the fixed point. The contribution of this paper is to extend the result of Suzuki for Kannan and Chatterjea type contractions.

\section{The CJM version of a Kannan-type contraction mapping}

The following fixed point theorem is proved in preparation for stating the weakest condition.
\begin{theorem}
\label{2.1}
Let $(X,d)$ be a complete metric space and suppose that the mapping $T: X\to X$ satisfies the following.
\begin{enumerate}
\item[(i)] For every $\varepsilon>0$, there exists $\delta>0$ such that $\frac{1}{2}\{d(x,Tx)+d(y,Ty)\}<\varepsilon+\delta$ implies $d(Tx,Ty)\le \varepsilon$.
\item[(ii)] $x\not=y$ implies $d(Tx,Ty)<\frac{1}{2}\{d(x,Tx)+d(y,Ty)\}$.
\end{enumerate}
Then, $T$ has a unique fixed point.
\end{theorem}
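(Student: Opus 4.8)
The plan is to run the classical Picard-iteration argument, with the quantity $f(x) := d(x,Tx)$ taking over the role that $d(x,y)$ plays in the Banach--CJM proof. Fix $x_0 \in X$ and put $x_{n+1} := Tx_n$. The first step is the elementary consequence of (ii) that $f(Tx) \le f(x)$ for every $x$, with strict inequality unless $x = Tx$: applying (ii) to the pair $(x, Tx)$ gives $d(Tx, T^2x) < \tfrac12\big(d(x,Tx) + d(Tx,T^2x)\big)$, i.e.\ $d(Tx, T^2x) < d(x,Tx)$. Consequently $(f(x_n))_n$ is non-increasing; if some iterate is a fixed point there is nothing left to prove for existence, so we may assume $f(x_n) > 0$ for all $n$ and $f(x_n) \downarrow r$ for some $r \ge 0$.

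The crux is to show $r = 0$. Suppose $r > 0$ and apply (i) with $\varepsilon = r$ to obtain $\delta > 0$ such that $\tfrac12\big(f(x)+f(y)\big) < r + \delta$ forces $d(Tx,Ty) \le r$. Choosing $N$ with $f(x_N) < r + \delta$, monotonicity gives $f(x_n) < r+\delta$ for all $n \ge N$, so feeding the pair $(x_N, x_{N+1})$ into this implication yields $f(x_{N+1}) = d(x_{N+1}, x_{N+2}) \le r$; since $x_{N+1}$ is not a fixed point, the strict part of the first step then gives $f(x_{N+2}) < f(x_{N+1}) \le r$, contradicting $f(x_n) \ge r$. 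Hence $r = 0$, that is, $d(x_n, x_{n+1}) \to 0$. I expect this to be the step that needs the most care, essentially because of the interplay between (i), (ii), and the monotonicity, together with the bookkeeping around iterates that may already be fixed points.

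Once $d(x_n,x_{n+1}) = f(x_n) \to 0$ is known, Cauchyness of $(x_n)$ is straightforward --- in fact easier than in the Banach--CJM setting, since the hypothesis of (i) is controlled by $f$ rather than by the mutual distances $d(x_n,x_m)$. Given $\eta > 0$, take $\delta > 0$ from (i) with $\varepsilon = \eta$ and choose $M$ so large that $f(x_n) < \eta + \delta$ for all $n \ge M$; then $\tfrac12\big(f(x_n)+f(x_m)\big) < \eta+\delta$ for $n,m \ge M$, whence $d(x_{n+1},x_{m+1}) = d(Tx_n,Tx_m) \le \eta$ for all $n,m \ge M$. Thus $(x_n)$ is Cauchy and, by completeness, $x_n \to z$ for some $z \in X$. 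To see that $z$ is a fixed point: if $x_n = z$ for all large $n$ then $z = Tz$ trivially, and otherwise $x_n \ne z$ along a subsequence, along which (ii) gives $d(x_{n+1},Tz) = d(Tx_n,Tz) < \tfrac12\big(f(x_n) + d(z,Tz)\big)$; letting $n\to\infty$ along that subsequence, the left side tends to $d(z,Tz)$ and the right side to $\tfrac12 d(z,Tz)$, forcing $d(z,Tz) = 0$.

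Uniqueness is immediate from (ii): if $z \ne w$ were two fixed points then $d(z,w) = d(Tz,Tw) < \tfrac12\big(d(z,Tz)+d(w,Tw)\big) = 0$, which is absurd. Note that the same argument in fact shows that every Picard sequence of $T$ converges to this unique fixed point $z$, which is the stronger conclusion needed for the optimality discussion that follows.
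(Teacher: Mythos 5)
Your proof is correct. It follows the same overall skeleton as the paper (Picard iteration, show $d(x_n,x_{n+1})\to 0$, deduce Cauchyness, identify the limit as the fixed point via (ii), and get uniqueness from (ii)), but it differs in one substantive respect: the paper outsources the entire Cauchy argument to Theorem~2 of Jachymski's paper on Meir--Keeler type conditions, whereas you prove it directly. Your version of that step is in fact cleaner than the Banach--CJM analogue, precisely for the reason you point out: the hypothesis of (i) involves only the self-distances $f(x_n)=d(x_n,Tx_n)$ and $f(x_m)$, both of which you have already driven to $0$, so a single application of (i) with $\varepsilon=\eta$ bounds $d(x_{n+1},x_{m+1})$ uniformly over the tail --- no Meir--Keeler-style induction on the gap $m-n$ is needed. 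Two further small points in your favour: your derivation of $r=0$ is self-contained (one could even stop at $f(x_{N+1})\le r$, which already contradicts the strict decrease to $r$; the extra step via $f(x_{N+2})$ is harmless), and in the step $Tz=z$ you correctly split off the case $x_n=z$ before invoking (ii), a case distinction the paper's displayed inequality silently skips over. The closing remark that every Picard sequence converges to the unique fixed point is also right and is exactly what the later "weakest condition" theorems need.
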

\begin{proof}
Fix $x_0\in X$. Set $x_n=T^nx_0$ for all $n\in\mathbb{N}$. 
We may follow Theorem 2 in \cite{Jachymski} to have that $(x_n)_{n\in\mathbb{N}}$ is a Cauchy sequence, and we omit it here. And then, since $(X,d)$ is complete, there exists $u\in X$ such that $x_n\to u$ as $n\to\infty$. From
\begin{align*}
d(u,Tu)&\le d(u,x_n)+d(x_n,Tx_n)+d(Tx_n,Tu) \\
&<d(u,x_n)+d(x_n,Tx_n)+\frac{1}{2}\{d(x_n,Tx_n)+d(u,Tu)\},
\end{align*}
we have
\[ \frac{1}{2}d(u,Tu)<d(u,x_n)+\frac{3}{2}d(x_n,Tx_n)\to 0 \ (n\to \infty). \]
Then we get $u=Tu$.

Finally, we show uniqueness. Let $u\not=v, \ Tu=u$ and $Tv=v$. Then we have
\[ d(u,v)=d(Tu,Tv)<\frac{1}{2}\{d(u,Tu)+d(v,Tv)\}=0, \]
which is a contradiction. This completes the proof of the theorem.
\end{proof}

\section{The weakest condition for a Kannan-type contraction mapping}

The weakest condition is a slightly weakened version of the condition for contraction mapping in Section 2.
\begin{theorem}
\label{3.1}
Let $(X,d)$ be a complete metric space and suppose that the mapping $T: X\to X$ satisfies the following.
\begin{align}
\label{CM}
x\not=y \ \text{implies} \ d(Tx,Ty)<\frac{1}{2}\{d(x,Tx)+d(y,Ty)\}.
\tag{CM}
\end{align}
Then, the following are equivalent.
\begin{enumerate}
\item[(i)] For any $x\in X$ and $\varepsilon>0$, there exists $\delta>0$ such that \\  
$\frac{1}{2}\{d(T^ix,T^{i+1}x)+d(T^jx,T^{j+1}x)\}<\varepsilon+\delta$ implies $d(T^{i+1}x,T^{j+1}x)\le \varepsilon$ for all $i,j\in\mathbb{N}\cup\{0\}$.
\item[(ii)] $T$ has a unique fixed point $z\in X$ and $(T^nx)$ converges to $z$ for any $x\in X$.
\end{enumerate}
\end{theorem}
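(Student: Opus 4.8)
The plan is to establish the two implications separately.

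\emph{Proof that $(ii)\Rightarrow(i)$.} I would argue by contradiction together with a diagonalization. Suppose $(i)$ fails for some $x\in X$ and some $\varepsilon_0>0$; writing $x_n=T^nx$ and $a_n=d(x_n,x_{n+1})$, this means that for every $k\in\mathbb{N}$ there are indices $i_k,j_k\in\mathbb{N}\cup\{0\}$ with $\tfrac12(a_{i_k}+a_{j_k})<\varepsilon_0+\tfrac1k$ but $d(x_{i_k+1},x_{j_k+1})>\varepsilon_0$. By $(ii)$ we have $x_n\to z$ with $Tz=z$, so $d(x_{i_k+1},x_{j_k+1})>\varepsilon_0$ forbids $\min(i_k,j_k)\to\infty$; hence, after passing to a subsequence and using the symmetry of $(i)$ in $i$ and $j$, I may assume $i_k=i^{\ast}$ is constant. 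Two cases remain. If $(j_k)$ has a bounded subsequence I may also take $j_k=j^{\ast}$ constant, so that letting $k\to\infty$ gives $\tfrac12(a_{i^{\ast}}+a_{j^{\ast}})\le\varepsilon_0$ while $d(x_{i^{\ast}+1},x_{j^{\ast}+1})>\varepsilon_0$; since $x_{i^{\ast}}\ne x_{j^{\ast}}$, this contradicts $(CM)$ applied to the pair $x_{i^{\ast}},x_{j^{\ast}}$. If instead $j_k\to\infty$, then $a_{j_k}\to0$ forces $\tfrac12 a_{i^{\ast}}\le\varepsilon_0$, and $x_{j_k+1}\to z$ forces $d(x_{i^{\ast}+1},z)\ge\varepsilon_0$; but $x_{i^{\ast}}\ne z$ (otherwise $x_{i^{\ast}+1}=Tz=z$), so $(CM)$ applied to $x_{i^{\ast}}$ and $z$ gives $d(x_{i^{\ast}+1},z)<\tfrac12\{a_{i^{\ast}}+d(z,Tz)\}=\tfrac12 a_{i^{\ast}}\le\varepsilon_0$, a contradiction.

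\emph{Proof that $(i)\Rightarrow(ii)$.} Uniqueness of a fixed point follows exactly as in the proof of Theorem \ref{2.1} from $(CM)$. To produce a fixed point, fix $x\in X$ and set $x_n=T^nx$, $a_n=d(x_n,x_{n+1})$. Applying $(CM)$ to $x_n,x_{n+1}$ shows that $(a_n)$ is non-increasing, and strictly decreasing while it is positive; thus $a_n\downarrow a\ge0$. I would next show $a=0$: assuming $a>0$, apply $(i)$ with this $x$ and $\varepsilon=a$ to obtain $\delta>0$; since $a_n\downarrow a$ there is $n_0$ with $\tfrac12(a_{n_0}+a_{n_0+1})\le a_{n_0}<a+\delta$, whence $a_{n_0+1}=d(x_{n_0+1},x_{n_0+2})\le a$, and together with $a_{n_0+1}\ge a$ this gives $a_{n_0+1}=a$; then $(CM)$ applied to $x_{n_0+1},x_{n_0+2}$ yields $a_{n_0+2}<a_{n_0+1}=a$, contradicting $a_{n_0+2}\ge a$. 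Hence $a_n\to0$.

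Next I would show $(x_n)$ is Cauchy by the classical CJM argument, following Theorem 2 of \cite{Jachymski}: if not, there are $\varepsilon_0>0$ and indices $n_k<m_k$ with $m_k$ chosen minimal so that $d(x_{n_k},x_{m_k})\ge\varepsilon_0$; the triangle inequality and $a_n\to0$ force $d(x_{n_k},x_{m_k})\to\varepsilon_0$, hence $d(x_{n_k+1},x_{m_k+1})\to\varepsilon_0$ as well, whereas applying $(i)$ with $\varepsilon=\varepsilon_0/2$ and using $a_{n_k},a_{m_k}\to0$ gives $d(x_{n_k+1},x_{m_k+1})\le\varepsilon_0/2$ for all large $k$, a contradiction. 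By completeness $x_n\to u$ for some $u\in X$, and $u=Tu$ follows as in Theorem \ref{2.1}: from $d(u,Tu)\le d(u,x_{n+1})+d(Tx_n,Tu)$ and $(CM)$ (applied along the $n$ with $x_n\ne u$, the alternative case being immediate) one gets $\tfrac12 d(u,Tu)\le d(u,x_{n+1})+\tfrac12 a_n\to0$. Since this argument applies to the Picard sequence of every starting point and the fixed point is unique, all Picard sequences converge to the same point $z$, which is $(ii)$.

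\emph{Main obstacle.} I expect the genuinely non-routine point to be in the direction $(ii)\Rightarrow(i)$: $(CM)$ alone among the iterates only yields the strict inequality $d(x_{i+1},x_{j+1})<\tfrac12\{a_i+a_j\}$, which is weaker than the uniform gap demanded by $(i)$, so one must diagonalize the failure of $(i)$ down to a single offending index and then invoke $(CM)$ \emph{between that iterate and the fixed point $z$} to close the argument. In the direction $(i)\Rightarrow(ii)$ the only delicate points are deducing $a_n\to0$ and running the Cauchy estimate using the orbital form of the CJM condition, but these follow the classical pattern.
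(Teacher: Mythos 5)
Your proposal is correct and follows essentially the same route as the paper: for $(i)\Rightarrow(ii)$ the strictly decreasing orbital distances, the use of $(i)$ to force $a_n\to 0$, and the passage to a fixed point; for $(ii)\Rightarrow(i)$ the same diagonalization of a failing sequence down to a constant index followed by an application of \eqref{CM} against the fixed point $z$. The only local difference is the Cauchy step, where the paper simply bounds $d(x_n,x_m)=d(Tx_{n-1},Tx_{m-1})\le\frac{1}{2}\{a_{n-1}+a_{m-1}\}\to 0$ directly from the (non-strict form of the) Kannan condition, which is shorter than your Jachymski-style minimal-index argument, though yours is also valid.
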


\begin{rmk}
\label{rmk3.1}
Condition (i) of Theorem \ref{2.1} is an assumption for any $x,y\in X$.
On the other hand, condition (i) of Theorem \ref{3.1} is an assumption restricted to the Picard sequence $(T^nx)$.
Therefore, condition (i) of Theorem \ref{3.1} is weaker than condition (i) of Theorem \ref{2.1}.
This is based on the weakest condition for Banach's contraction mapping by Suzuki \cite{Suzuki}.
\end{rmk}

\begin{proof}[Proof of Theorem \ref{3.1}]
((i)$\Rightarrow$(ii)) Fix $x_0\in X$. Set $x_n=T^nx_0$ for all $n\in\mathbb{N}$. 
If there exists $N\in \mathbb{N}$ such that $x_N=x_{N+1}$, $x_N$ is a fixed point of $T$. Therefore, assume that $x_n\not=x_{n+1}$ for any $n\in\mathbb{N}$. Then, we have
\begin{align*}
d(x_n,x_{n+1})&=d(Tx_{n-1},Tx_n)<\frac{1}{2}\{d(x_{n-1},Tx_{n-1})+d(x_n,Tx_n)\} \\
&=\frac{1}{2}\{d(x_{n-1},x_n)+d(x_n,x_{n+1})\}.
\end{align*}
So, 
\[ \frac{1}{2}d(x_n,x_{n+1})<\frac{1}{2}d(x_{n-1},x_n). \]
Therefore, $(d(x_n,x_{n+1}))_{n\in\mathbb{N}}$ is a strictly decreasing sequence.

Thus, there exists $\varepsilon_0\ge0$ such that $d(x_n,x_{n+1})\to \varepsilon_0$ as $n\to\infty$ and $d(x_n,x_{n+1})>\varepsilon_0$ for any $n$.

Assume that $\varepsilon_0>0$. Then there exist $\delta_0>0$ and $k\in\mathbb{N}$ such that
\[ \frac{1}{2}\{d(x_k,Tx_k)+d(x_{k+1},Tx_{k+1})\}<\varepsilon_0+\delta_0. \]
By assumption (i) we have $d(Tx_k,Tx_{k+1})\le \varepsilon_0$, which is a contradiction. Therefore, $\varepsilon_0=0$ and 
\begin{align*}
\lim_{n\to \infty}d(x_n,x_{n+1})=0.
\end{align*}
Next, we show that $(x_n)$ is a Cauchy sequence. 
For this, first note that if $x=y$, then we always have
\[ d(Tx,Ty)\le \frac{1}{2}\{d(x,Tx)+d(y,Ty)\}. \]
For any $n,m\in\mathbb{N}$ with $n,m\ge 2$, we have
\begin{align*}
d(x_n,x_m)&=d(Tx_{n-1},Tx_{m-1}) \\
&\le \frac{1}{2}\{d(x_{n-1},Tx_{n-1})+d(x_{m-1},Tx_{m-1})\} \\
&=\frac{1}{2}\{d(x_{n-1},x_n)+d(x_{m-1},x_m)\}\to 0 \ \text{as} \ n,m\to \infty.
\end{align*}
Therefore, $(x_n)_{n\in\mathbb{N}}$ is a Cauchy sequence, and the remainder of the proof follows as in Theorem \ref{2.1}.

((ii)$\Rightarrow$(i)) Let $z\in X$ be a fixed point of $T$. Arguing by contradiction, we assume that (i) does not hold, that is, there exist $x\in X$ and $\varepsilon>0$ such that for all $n\in \mathbb{N}$ there exists $(f(n)),(g(n))\subset \mathbb{N}\cup\{0\}$ such that
\begin{align}
&f(n)<g(n), \nonumber \\
&\frac{1}{2}\{d(x_{f(n)},x_{f(n)+1})+d(x_{g(n)},x_{g(n)+1})\}<\varepsilon+\frac{1}{n} \quad \text{and} \\
\label{32}
&\varepsilon<d(x_{f(n)+1},x_{g(n)+1}),
\end{align}
where $x_{f(n)}=T^{f(n)}x$. Without loss of generality, we may assume that either
\begin{itemize}
\item $\displaystyle \lim_{n\to \infty}f(n)=\infty$ \ or
\item There exists $\mu\in \mathbb{N}\cup\{0\}$ such that $f(n)=\mu$ for all $n\in\mathbb{N}$.
\end{itemize}
Note that if $(f(n))$ oscillates, the second case is attributed. Similarly, we may assume that either
\begin{itemize}
\item $\displaystyle \lim_{n\to \infty}g(n)=\infty$ \ or
\item There exists $\nu\in \mathbb{N}\cup\{0\}$ such that $g(n)=\nu$ for all $n\in\mathbb{N}$.
\end{itemize}
By \eqref{32}, we have $x_{f(n)}\not=x_{g(n)}$. So by \eqref{CM}, we obtain
\begin{align*}
\varepsilon&<d(x_{f(n)+1},x_{g(n)+1})<\frac{1}{2}\{d(x_{f(n)},x_{f(n)+1})+d(x_{g(n)},x_{g(n)+1})\}<\varepsilon+\frac{1}{n}.
\end{align*}
Therefore, we get
\begin{align}
\label{33}
0<\varepsilon&=\lim_{n\to\infty}d(x_{f(n)+1},x_{g(n)+1}) \nonumber \\
&=\frac{1}{2}\lim_{n\to\infty}\{d(x_{f(n)},x_{f(n)+1})+d(x_{g(n)},x_{g(n)+1})\}.
\end{align}
Since $(x_n)$ converges to $z$, $f(n)\not\to \infty$ or $g(n)\not\to \infty$ holds. Suppose that $f(n)\not\to \infty$ holds, that is, $f(n)=\mu$ for all $n\in\mathbb{N}$ is satisfied.

First, if $g(n)=\nu$ for all $n\in\mathbb{N}$, we have
\[ d(x_{\mu+1},x_{\nu+1})=\varepsilon, \]
which is a contradiction. 
On the other hand, if $\displaystyle \lim_{n\to\infty}g(n)=\infty$, then $x_{g(n)}\to z$, so from \eqref{33} we have
\begin{align}
\label{34}
d(x_{\mu+1},z)=\varepsilon&=\frac{1}{2}d(x_{\mu},x_{\mu+1})\le \frac{1}{2}\{d(x_{\mu},z)+d(z,x_{\mu+1})\}. 
\end{align}
We can show the first equality in \eqref{34} since we obtain
\begin{align*}
|d(x_{\mu+1},x_{g(n)+1})-d(x_{\mu+1},z)|\le d(x_{g(n)+1},z)\to 0, \ \text{as} \ n\to \infty,
\end{align*}
from triangle inequality 
\begin{align*}
d(x_{\mu+1},x_{g(n)+1})-d(x_{\mu+1},z)&\le d(x_{g(n)+1},z), \\
d(x_{\mu+1},z)-d(x_{\mu+1},x_{g(n)+1})&\le d(x_{g(n)+1},z).
\end{align*}
From the first equality in \eqref{34}, $d(x_{\mu+1},z)\not=0$ holds. By \eqref{34}, we get
\[ \frac{1}{2}d(x_{\mu+1},z)\le \frac{1}{2}d(x_{\mu},z). \]
Then, $d(x_{\mu},z)\not=0$ holds. So by \eqref{CM}, we obtain
\begin{align*}
d(x_{\mu+1},z)=d(Tx_{\mu},Tz)<\frac{1}{2}\{d(x_{\mu},x_{\mu+1})+d(z,Tz)\}=\frac{1}{2}d(x_{\mu},x_{\mu+1}),
\end{align*}
which is a contradiction with \eqref{34}. Therefore, (i) holds.
\end{proof}

\section{The case of Chatterjea type contractions}

Similarly, we can show a fixed point theorem and weakest condition in the CJM version for contraction mappings of Chatterjea type.
\begin{theorem}
\label{4.1}
Let $(X,d)$ be a complete metric space and suppose that the mapping $T: X\to X$ satisfies the following.
\begin{enumerate}
\item[(i)] For every $\varepsilon>0$, there exists $\delta>0$ such that $\frac{1}{2}\{d(x,Ty)+d(y,Tx)\}<\varepsilon+\delta$ implies $d(Tx,Ty)\le \varepsilon$.
\item[(ii)] $x\not=y$ implies $d(Tx,Ty)<\frac{1}{2}\{d(x,Ty)+d(y,Tx)\}$.
\end{enumerate}
Then, $T$ has a unique fixed point.
\end{theorem}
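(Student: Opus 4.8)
The plan is to follow the proof of Theorem~\ref{2.1} almost verbatim; the only place where the Chatterjea structure forces genuinely new work is the verification that a Picard sequence is Cauchy, which does \emph{not} reduce as cleanly as in the Kannan case. Fix $x_0\in X$ and set $x_n=T^nx_0$ for $n\in\mathbb{N}\cup\{0\}$. If $x_n=x_{n+1}$ for some $n$, then $x_n$ is a fixed point and one passes directly to uniqueness; so assume $x_n\neq x_{n+1}$ for every $n$.

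First I would show the one-step distances $\gamma_n:=d(x_n,x_{n+1})$ tend to $0$. Applying (ii) to $(x_{n-1},x_n)$ and using $d(x_n,Tx_{n-1})=d(x_n,x_n)=0$ gives
\[ \gamma_n=d(Tx_{n-1},Tx_n)<\frac{1}{2}d(x_{n-1},x_{n+1})\le\frac{1}{2}\{\gamma_{n-1}+\gamma_n\}, \]
so $\gamma_n<\gamma_{n-1}$, and hence $\gamma_n\downarrow\varepsilon_0$ for some $\varepsilon_0\ge0$ with $\gamma_n>\varepsilon_0$ for all $n$. If $\varepsilon_0>0$, take $\delta_0>0$ from (i) for $\varepsilon_0$; since $\frac{1}{2}\{d(x_{n-1},Tx_n)+d(x_n,Tx_{n-1})\}=\frac{1}{2}d(x_{n-1},x_{n+1})\le\frac{1}{2}\{\gamma_{n-1}+\gamma_n\}\to\varepsilon_0<\varepsilon_0+\delta_0$, the hypothesis of (i) holds for $(x_{n-1},x_n)$ once $n$ is large, forcing $\gamma_n=d(Tx_{n-1},Tx_n)\le\varepsilon_0$, a contradiction. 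Hence $\gamma_n\to0$.

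The main obstacle is Cauchyness: applying (ii) to $(x_{n-1},x_{m-1})$ produces the \emph{cross} distances $d(x_{n-1},x_m)$ and $d(x_{m-1},x_n)$ rather than one-step distances, so the immediate telescoping estimate available in Theorem~\ref{3.1} is gone, and one must instead run the CJM-type induction underlying Theorem~2 of \cite{Jachymski}. Given $\varepsilon>0$, take $\delta=\delta(\varepsilon)>0$ from (i), shrunk so that $\delta\le\varepsilon$, and choose $N$ with $\gamma_N<\delta/2$ (hence $\gamma_n<\delta/2$ for all $n\ge N$, as $\gamma_n$ decreases). I claim $d(x_N,x_{N+k})<\varepsilon+\delta/2$ for every $k\ge0$, by induction on $k$. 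The cases $k=0,1$ are immediate. For $k\ge2$, the inductive hypothesis at $k-1$ together with the triangle inequality gives $d(x_N,x_{N+k})\le d(x_N,x_{N+k-1})+\gamma_{N+k-1}<\varepsilon+\delta$ and $d(x_{N+k-1},x_{N+1})\le d(x_N,x_{N+k-1})+\gamma_N<\varepsilon+\delta$, whence
\[ \frac{1}{2}\{d(x_N,Tx_{N+k-1})+d(x_{N+k-1},Tx_N)\}=\frac{1}{2}\{d(x_N,x_{N+k})+d(x_{N+k-1},x_{N+1})\}<\varepsilon+\delta; \]
condition (i) then yields $d(x_{N+1},x_{N+k})=d(Tx_N,Tx_{N+k-1})\le\varepsilon$, so $d(x_N,x_{N+k})\le\gamma_N+d(x_{N+1},x_{N+k})<\varepsilon+\delta/2$, closing the induction. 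Running the triangle inequality through $x_N$ then gives $d(x_n,x_m)\le 2\varepsilon+\delta\le3\varepsilon$ for all $n,m\ge N$, so $(x_n)$ is Cauchy. The delicate point I expect to spend the most care on is precisely this bookkeeping: expanding $\frac{1}{2}\{d(x,Ty)+d(y,Tx)\}$ via the triangle inequality consumes part of the $\delta$-budget, which is why the induction must be carried with the tighter bound $\varepsilon+\delta/2$ (and $N$ chosen against $\delta/2$, not $\delta$).

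Finally, completeness gives $x_n\to u$ for some $u\in X$. If $x_n=u$ for all large $n$ then $u=x_{n+1}=Tx_n=Tu$; otherwise pick $n$ large with $x_n\neq u$, and (ii) yields
\[ d(u,Tu)\le d(u,x_{n+1})+d(Tx_n,Tu)<d(u,x_{n+1})+\frac{1}{2}\{d(x_n,Tu)+d(u,x_{n+1})\}, \]
so letting $n\to\infty$ (using $x_n,x_{n+1}\to u$ and $d(x_n,Tu)\to d(u,Tu)$) gives $d(u,Tu)\le\frac{1}{2}d(u,Tu)$, i.e. $Tu=u$. For uniqueness, if $Tu=u$, $Tv=v$ and $u\neq v$, then (ii) gives $d(u,v)=d(Tu,Tv)<\frac{1}{2}\{d(u,v)+d(v,u)\}=d(u,v)$, a contradiction. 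This completes the argument.
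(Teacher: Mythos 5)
Your proof is correct and follows the same overall route as the paper (Picard iteration, Cauchyness of $(x_n)$, passage to the limit, uniqueness); the one place you add genuine content is the Cauchy step, which the paper dispatches by citing Theorem 2 of \cite{Jachymski}, whereas you give a self-contained induction anchored at $x_N$ with the $\varepsilon+\delta/2$ bookkeeping, closely paralleling the induction the paper only writes out later in the proof of Theorem \ref{4.2}. Your fixed-point step is also slightly more careful than the paper's, since you check $x_n\neq u$ before invoking condition (ii).
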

\begin{proof}
Fix $x_0\in X$. Set $x_n=T^nx_0$ for all $n\in\mathbb{N}$. 
We may follow Theorem 2 in \cite{Jachymski} to have that $(x_n)_{n\in\mathbb{N}}$ is a Cauchy sequence, and we omit it here. And then, since $(X,d)$ is complete, there exists $u\in X$ such that $x_n\to u$ as $n\to\infty$. From
\begin{align*}
&d(u,Tu) \\
&\le d(u,x_n)+d(x_n,Tx_n)+d(Tx_n,Tu) \\
&<d(u,x_n)+d(x_n,Tx_n)+\frac{1}{2}\{d(x_n,Tu)+d(u,Tx_n)\} \\
&\le d(u,x_n)+d(x_n,Tx_n)+\frac{1}{2}d(x_n,Tu)+\frac{1}{2}d(u,x_n)+\frac{1}{2}d(x_n,Tx_n) \\
&\le d(u,x_n)+d(x_n,Tx_n)+\frac{1}{2}\{d(x_n,u)+d(u,Tu)\} \\
& \quad \quad +\frac{1}{2}d(u,x_n)+\frac{1}{2}d(x_n,Tx_n),
\end{align*}
we have
\[ \frac{1}{2}d(u,Tu)<2d(u,x_n)+\frac{3}{2}d(x_n,Tx_n)\to 0 \ (n\to \infty). \]
Then we get $u=Tu$.

For uniqueness, let $u\not=v, \ Tu=u$ and $Tv=v$. Then we have
\begin{align*}
d(u,v)&=d(Tu,Tv)<\frac{1}{2}\{d(u,Tv)+d(v,Tu)\} \\
&\le \frac{1}{2}\{d(u,v)+d(v,Tv)\}+\frac{1}{2}\{d(v,u)+d(u,Tu)\}=d(u,v),
\end{align*}
which is a contradiction.
\end{proof}
\begin{theorem}
\label{4.2}
Let $(X,d)$ be a complete metric space and suppose that the mapping $T: X\to X$ satisfies the following:
\begin{align}
\label{CM2}
x\not=y \ \text{implies} \ d(Tx,Ty)<\frac{1}{2}\{d(x,Ty)+d(y,Tx)\}.
\tag{CM2}
\end{align}
Then, the following are equivalent.
\begin{enumerate}
\item[(i)] For any $x\in X$ and $\varepsilon>0$, there exists $\delta>0$ such that \\  
$\frac{1}{2}\{d(T^ix,T^{j+1}x)+d(T^jx,T^{i+1}x)\}<\varepsilon+\delta$ implies $d(T^{i+1}x,T^{j+1}x)\le \varepsilon$ for all $i,j\in\mathbb{N}\cup\{0\}$.
\item[(ii)] $T$ has a unique fixed point $z\in X$ and $(T^nx)$ converges to $z$ for any $x\in X$.
\end{enumerate}
\end{theorem}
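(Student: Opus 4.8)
The plan is to follow the proof of Theorem~\ref{3.1} step by step, replacing the Kannan estimate $d(Tx,Ty)\le\frac{1}{2}\{d(x,Tx)+d(y,Ty)\}$ by the Chatterjea estimate $d(Tx,Ty)\le\frac{1}{2}\{d(x,Ty)+d(y,Tx)\}$ (which, as in the proof of Theorem~\ref{3.1}, also holds with equality when $x=y$) and carrying along the crossed indices that now appear in condition~(i). For the direction ((i)$\Rightarrow$(ii)), fix $x_0\in X$, set $x_n=T^nx_0$, and assume $x_n\ne x_{n+1}$ for all $n$ (otherwise a fixed point is already at hand). From \eqref{CM2},
\begin{align*}
d(x_n,x_{n+1})&=d(Tx_{n-1},Tx_n)<\frac{1}{2}\{d(x_{n-1},x_{n+1})+d(x_n,x_n)\}\\
&=\frac{1}{2}d(x_{n-1},x_{n+1})\le\frac{1}{2}\{d(x_{n-1},x_n)+d(x_n,x_{n+1})\},
\end{align*}
so $(d(x_n,x_{n+1}))_n$ strictly decreases to some $\varepsilon_0\ge 0$ while staying above $\varepsilon_0$. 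If $\varepsilon_0>0$, choose $\delta_0>0$ from condition~(i) for $\varepsilon=\varepsilon_0$; then for large $k$ we have $\frac{1}{2}d(x_k,x_{k+2})\le\frac{1}{2}\{d(x_k,x_{k+1})+d(x_{k+1},x_{k+2})\}<\varepsilon_0+\delta_0$, and condition~(i) with $i=k,\ j=k+1$ gives $d(x_{k+1},x_{k+2})\le\varepsilon_0$, a contradiction; hence $d(x_n,x_{n+1})\to 0$.

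The step I expect to be the main obstacle is showing that $(x_n)$ is Cauchy. In Theorem~\ref{3.1} this was immediate because the Kannan bound $d(x_n,x_m)\le\frac{1}{2}\{d(x_{n-1},x_n)+d(x_{m-1},x_m)\}$ has a right-hand side tending to $0$; the Chatterjea bound $d(x_n,x_m)\le\frac{1}{2}\{d(x_{n-1},x_m)+d(x_{m-1},x_n)\}$ couples $n$ and $m$, so no such direct estimate is available. Instead I would run a Meir--Keeler style argument: assuming $(x_n)$ is not Cauchy, fix a witnessing $\varepsilon>0$, take $\delta<\varepsilon$ from condition~(i), pick $N$ (large) with $d(x_k,x_{k+1})<\delta/4$ for all $k\ge N-2$, and choose some $n\ge N$ together with the smallest $m>n$ satisfying $d(x_n,x_m)\ge\varepsilon$; then $m\ge n+2$, $d(x_n,x_{m-1})<\varepsilon$, and $\varepsilon\le d(x_n,x_m)<\varepsilon+\delta/4$. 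After routine triangle-inequality bounds on $d(x_{n-1},x_m)$ and $d(x_{m-1},x_n)$, condition~(i) applied with $i=n-1,\ j=m-1$ gives $d(x_n,x_m)\le\varepsilon$, hence $d(x_n,x_m)=\varepsilon$. This non-strict equality is not yet a contradiction, and that is the crux: plugging it into \eqref{CM2} yields $\varepsilon<\frac{1}{2}\{d(x_{n-1},x_m)+d(x_{m-1},x_n)\}$, and since $d(x_{m-1},x_n)<\varepsilon$ this forces $d(x_{n-1},x_m)>\varepsilon$; applying condition~(i) a \emph{second} time, now with $i=n-2,\ j=m-1$, gives $d(x_{n-1},x_m)\le\varepsilon$, the desired contradiction. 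Once $(x_n)$ is Cauchy, completeness gives $x_n\to u$, and $u=Tu$ together with uniqueness of the fixed point follows exactly as in Theorem~\ref{4.1}; uniqueness then makes every Picard sequence converge to that same fixed point.

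For ((ii)$\Rightarrow$(i)), let $z$ be the fixed point. If (i) fails there are $x\in X$, $\varepsilon>0$ and, for every $n$, indices $f(n)<g(n)$ with $\frac{1}{2}\{d(x_{f(n)},x_{g(n)+1})+d(x_{g(n)},x_{f(n)+1})\}<\varepsilon+\frac{1}{n}$ and $\varepsilon<d(x_{f(n)+1},x_{g(n)+1})$, where $x_k=T^kx$. The last inequality forces $x_{f(n)}\ne x_{g(n)}$, so \eqref{CM2} squeezes both $d(x_{f(n)+1},x_{g(n)+1})$ and the displayed average to $\varepsilon$. After passing to a subsequence, each of $(f(n))$ and $(g(n))$ is either constant or tends to $\infty$; they cannot both tend to $\infty$ since $x_n\to z$ would then make $d(x_{f(n)+1},x_{g(n)+1})\to 0$, and swapping the roles of $f$ and $g$ if necessary (which leaves both conditions unchanged) we may assume $f(n)\equiv\mu$. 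If also $g(n)\equiv\nu$, then $d(x_{\mu+1},x_{\nu+1})=\varepsilon$ contradicts $d(x_{\mu+1},x_{\nu+1})>\varepsilon$. If $g(n)\to\infty$, then $x_{g(n)},x_{g(n)+1}\to z$ and the limit relations give $d(x_{\mu+1},z)=\varepsilon=\frac{1}{2}\{d(x_\mu,z)+d(z,x_{\mu+1})\}$, whence $d(x_\mu,z)=d(x_{\mu+1},z)=\varepsilon\ne 0$; then \eqref{CM2} applied to the distinct points $x_\mu$ and $z$ gives $d(x_{\mu+1},z)=d(Tx_\mu,Tz)<\frac{1}{2}\{d(x_\mu,z)+d(z,x_{\mu+1})\}=d(x_{\mu+1},z)$, a contradiction. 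Hence (i) holds.
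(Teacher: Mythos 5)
Your proposal is correct, and the only place where it genuinely diverges from the paper is the one you yourself flagged as the crux: proving that $(x_n)$ is Cauchy. The paper does this by a direct induction on $p$: after fixing $\varepsilon$ and the corresponding $\delta$ from (i), and taking $n$ large enough that consecutive distances are below $\delta$, it assumes $d(x_n,x_{n+p-1})\le\varepsilon$ and estimates
\[
\tfrac{1}{2}\{d(x_{n-1},x_{n+p})+d(x_{n+p-1},x_{n})\}\le \tfrac{1}{2}\{d(x_{n-1},x_n)+d(x_n,x_{n+p-1})+d(x_{n+p-1},x_{n+p})+d(x_{n+p-1},x_n)\}<\varepsilon+\delta,
\]
so that (i) with $i=n-1$, $j=n+p-1$ yields $d(x_n,x_{n+p})\le\varepsilon$ directly — one application of (i) per step and no contradiction argument. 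Your route instead negates Cauchyness, takes a first escape time $m$, uses (i) once to force $d(x_n,x_m)=\varepsilon$, then invokes \eqref{CM2} to push $d(x_{n-1},x_m)$ strictly above $\varepsilon$ and applies (i) a second time with $i=n-2$, $j=m-1$ to contradict that; I checked the triangle-inequality budgets ($\varepsilon+\delta/4$ and $\varepsilon+3\delta/8$ against $\varepsilon+\delta$) and they close, provided $N\ge 2$ so that $x_{n-2}$ exists. So your argument is valid but needs the extra interplay between (i) and \eqref{CM2} that the paper's induction avoids; the paper's version is shorter and scales trivially to all $p$, while yours is closer in spirit to the classical Meir--Keeler non-Cauchy argument. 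Everything else — the strictly decreasing consecutive distances, the proof that their limit is $0$ via $i=k$, $j=k+1$, the passage to the fixed point and uniqueness via Theorem~\ref{4.1}, and the whole of ((ii)$\Rightarrow$(i)) including the squeeze to $\varepsilon$, the case analysis on $f$ and $g$, and the final contradiction with \eqref{CM2} at $x_\mu\ne z$ — matches the paper's proof essentially line for line.
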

\begin{proof}
((i)$\Rightarrow$(ii)) Fix $x_0\in X$. Set $x_n=T^nx_0$ for all $n\in\mathbb{N}$.
If there exists $N\in \mathbb{N}$ such that $x_N=x_{N+1}$, $x_N$ is a fixed point of $T$. Therefore, assume that $x_n\not=x_{n+1}$ for any $n\in\mathbb{N}$. Then, we have
\begin{align*}
d(x_n,x_{n+1})&=d(Tx_{n-1},Tx_n)<\frac{1}{2}\{d(x_{n-1},Tx_n)+d(x_n,Tx_{n-1})\} \\
&=\frac{1}{2}\{d(x_{n-1},x_{n+1})+d(x_n,x_n)\}\le \frac{1}{2}\{d(x_{n-1},x_n)+d(x_n,x_{n+1})\}.
\end{align*}
So, 
\[ \frac{1}{2}d(x_n,x_{n+1})<\frac{1}{2}d(x_{n-1},x_n). \]
Therefore, $(d(x_n,x_{n+1}))_{n\in\mathbb{N}}$ is a strictly decreasing sequence.

Thus, there exists $\varepsilon_0\ge0$ such that $d(x_n,x_{n+1})\to \varepsilon_0$ as $n\to\infty$ and $d(x_n,x_{n+1})>\varepsilon_0$ for any $n$.

Assume that $\varepsilon_0>0$. Then there exist $\delta_0>0$ and $k\in\mathbb{N}$ such that
\[ \frac{1}{2}\{d(x_k,Tx_{k+1})+d(x_{k+1},Tx_k)\}\le \frac{1}{2}\{d(x_k,x_{k+1})+d(x_{k+1},Tx_{k+1})\}<\varepsilon_0+\delta_0. \]
By assumption (i) we have $d(Tx_k,Tx_{k+1})\le \varepsilon_0$, which is a contradiction. Therefore, $\varepsilon_0=0$ and 
\begin{align}
\label{41}
\lim_{n\to \infty}d(x_n,x_{n+1})=0.
\end{align}
Next, we show that $(x_n)$ is a Cauchy sequence. Let $\varepsilon>0$ be arbitrary and let $\delta$ be chosen accordingly as (i).
By \eqref{41}, there exists some $k\in\mathbb{N}$ such that $d(x_{n-1},x_n)<\delta$ for each $n>k$. Fix $n>k$. It suffices to show that
\begin{align*}
d(x_n,x_{n+p})\le \varepsilon, \ \text{for all} \ p\in\mathbb{N}.
\end{align*}
We show by induction.
The case of $p=1$ is obvious.
We show the case of $p$ assuming the case of $p-1$. In this case, we have
\begin{align*}
&\frac{1}{2}\{ d(x_{n-1},Tx_{n+p-1})+d(x_{n+p-1},Tx_{n-1})\} \\
&=\frac{1}{2}\{ d(x_{n-1},x_{n+p})+d(x_{n+p-1},x_{n})\} \\
&\le \frac{1}{2}\{ d(x_{n-1},x_n)+d(x_n,x_{n+p-1})+d(x_{n+p-1},x_{n+p})+d(x_{n+p-1},x_n)\} \\
&<\delta+\varepsilon.
\end{align*}
Therefore,
\[ d(x_n,x_{n+p})=d(Tx_{n-1},Tx_{n+p-1})\le \varepsilon. \]
Therefore, $(x_n)_{n\in\mathbb{N}}$ is a Cauchy sequence, and the remainder of the proof follows as in Theorem \ref{4.1}.

((ii)$\Rightarrow$(i)) Let $z\in X$ be a fixed point of $T$. Arguing by contradiction, we assume that (i) does not hold, that is, there exist $x\in X$ and $\varepsilon>0$ such that for all $n\in \mathbb{N}$ there exists $(f(n)),(g(n))\subset \mathbb{N}\cup\{0\}$ such that
\begin{align}
&f(n)<g(n), \nonumber \\
&\frac{1}{2}\{d(x_{f(n)},x_{g(n)+1})+d(x_{g(n)},x_{f(n)+1})\}<\varepsilon+\frac{1}{n} \quad \text{and} \\
\label{43}
&\varepsilon<d(x_{f(n)+1},x_{g(n)+1}),
\end{align}
where $x_{f(n)}=T^{f(n)}x$. Without loss of generality, we may assume that either
\begin{itemize}
\item $\displaystyle \lim_{n\to \infty}f(n)=\infty$ \ or
\item There exists $\mu\in \mathbb{N}\cup\{0\}$ such that $f(n)=\mu$ for all $n\in\mathbb{N}$.
\end{itemize}
Note that if $(f(n))$ oscillates, the second case is attributed. Similarly, we may assume that either
\begin{itemize}
\item $\displaystyle \lim_{n\to \infty}g(n)=\infty$ \ or
\item There exists $\nu\in \mathbb{N}\cup\{0\}$ such that $g(n)=\nu$ for all $n\in\mathbb{N}$.
\end{itemize}
By \eqref{43}, we have $x_{f(n)}\not=x_{g(n)}$. So by \eqref{CM2}, we obtain
\begin{align*}
\varepsilon&<d(x_{f(n)+1},x_{g(n)+1})<\frac{1}{2}\{d(x_{f(n)},x_{g(n)+1})+d(x_{g(n)},x_{f(n)+1})\}<\varepsilon+\frac{1}{n}.
\end{align*}
Therefore, we get
\begin{align}
\label{44}
0<\varepsilon&=\lim_{n\to\infty}d(x_{f(n)+1},x_{g(n)+1}) \nonumber \\
&=\frac{1}{2}\lim_{n\to\infty}\{d(x_{f(n)},x_{g(n)+1})+d(x_{g(n)},x_{f(n)+1})\}.
\end{align}
Since $(x_n)$ converges to $z$, $f(n)\not\to \infty$ or $g(n)\not\to \infty$ holds. Suppose that $f(n)\not\to \infty$ holds, that is, $f(n)=\mu$ for all $n\in\mathbb{N}$ is satisfied.

First, if $g(n)=\nu$ for all $n\in\mathbb{N}$, we have
\[ d(x_{\mu+1},x_{\nu+1})=\varepsilon, \]
which is a contradiction. 
On the other hand, if $\displaystyle \lim_{n\to\infty}g(n)=\infty$, then $x_{g(n)}\to z$, so from \eqref{44} we have
\begin{align}
\label{45}
d(x_{\mu+1},z)=\varepsilon&=\frac{1}{2}\lim_{n\to\infty}\{d(x_{f(n)},x_{g(n)+1})+d(x_{g(n)},x_{f(n)+1})\} \nonumber \\
&=\frac{1}{2}\{d(x_{\mu},z)+d(z,x_{\mu+1})\}. 
\end{align}
We can show the third equality in \eqref{45} since we obtain
\begin{align*}
&|\{d(x_{\mu},z)+d(z,x_{\mu+1})\}-\{d(x_{\mu},x_{g(n)+1})+d(x_{g(n)},x_{\mu+1})\}| \\
&\le |d(x_{g(n)},z)|+|d(x_{g(n)+1},z)|\to 0, \ \text{as} \ n\to \infty,
\end{align*}
from the triangle inequality 
\begin{align*}
d(x_{\mu+1},z)-d(x_{\mu+1},x_{g(k)})&\le d(x_{g(k)},z), \\
d(x_{\mu},z)-d(x_{\mu},x_{g(k)+1})&\le d(x_{g(k)+1},z).
\end{align*}
By \eqref{45}, $d(x_{\mu},z)\not=0$ holds. So by \eqref{CM2}, we obtain
\begin{align*}
d(x_{\mu+1},z)&=d(Tx_{\mu},Tz)<\frac{1}{2}\{d(x_{\mu},Tz)+d(z,Tx_{\mu})\} \\
&=\frac{1}{2}\{d(x_{\mu},z)+d(z,x_{\mu+1})\},
\end{align*}
which is a contradiction with \eqref{45}. Therefore, (i) holds.
\end{proof}

\section{Discussion on the weakest conditions}

In this section, we discuss the equivalence of the weakest conditions.
As shown in the proof of Theorem \ref{2.1}, under the \eqref{CM} condition, the sequence $(d(T^nx,T^{n+1}x))_{n\in\mathbb{N}}$ is strictly decreasing, and hence its nonnegative limit exists.
Let us denote this limit by $\alpha$.
\begin{prop}
\label{5.1}
Assume the \eqref{CM} condition of Theorem \ref{3.1}. 
Let $\alpha$ be the limit of the sequence $(d(T^nx,T^{n+1}x))_{n\in\mathbb{N}}$. Then the following statements are equivalent.
\begin{enumerate}
\item[(i)] $\alpha=0.$
\item[(ii)] For any $\varepsilon>0$, there exists $\delta>0$ such that 

\noindent
$\frac{1}{2}\{ d(T^ix,T^{i+1}x)+d(T^{i+1}x,T^{i+2}x)\}<\varepsilon+\delta$ implies $d(T^{i+1}x,T^{i+2}x)\le \varepsilon$ for all $i\in\mathbb{N}$.
\item[(iii)] For any $\varepsilon>0$, there exists $\delta>0$ such that 

\noindent
$\frac{1}{2}\{ d(T^ix,T^{i+1}x)+d(T^{i+1}x,T^{i+2}x)\}<\varepsilon+\delta$ implies $d(T^{i+2}x,T^{i+3}x)\le \varepsilon$ for all $i\in\mathbb{N}$.
\item[(iv)] For any $\varepsilon\in (0,\infty)\backslash \{d(T^kx,T^{k+1}x) : k=1,2,\cdots\}$, there exists $\delta>0$ such that \\
$\frac{1}{2}\{ d(T^ix,T^{i+1}x)+d(T^{j}x,T^{j+1}x)\}<\varepsilon+\delta$ implies $d(T^{i+1}x,T^{j+1}x)\le \varepsilon$ for all $i,j\in\mathbb{N}$.
\end{enumerate}
Moreover, if $(X,d)$ is complete, then the following is also equivalent to the above.
\begin{enumerate}
\item[(v)] For any $\varepsilon>0$, there exists $\delta>0$ such that 

\noindent
$\frac{1}{2}\{ d(T^ix,T^{i+1}x)+d(T^{j}x,T^{j+1}x)\}<\varepsilon+\delta$ implies $d(T^{i+1}x,T^{j+1}x)\le \varepsilon$ for all $i,j\in\mathbb{N}$.
\end{enumerate}
\end{prop}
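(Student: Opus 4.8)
The plan is to run all the equivalences through (i) as a hub, using throughout the abbreviations $x_n=T^nx$, $a_n=d(x_n,x_{n+1})$. One may assume $x_n\neq x_{n+1}$ for every $n$ (otherwise some $x_N$ is a fixed point, $\alpha=0$, and (ii)--(v) hold trivially once indices exceed $N$), so that $(a_n)$ is strictly decreasing with $a_n\downarrow\alpha$, $a_n>\alpha$, and the only tool needed is the consequence of \eqref{CM}
\[
d(x_{i+1},x_{j+1})\le\tfrac12\{a_i+a_j\}\le a_{\min(i,j)}\qquad(i,j\ge 0),
\]
strict whenever $i\neq j$ (since then $x_i\neq x_j$). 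First I would dispose of all four implications into (i) at once: assume $\alpha>0$; because $(a_n)$ decreases \emph{strictly} to $\alpha$ we have $\alpha\notin\{a_k:k\ge 1\}$, so $\varepsilon:=\alpha$ is admissible in (iv) as well, and with the associated $\delta$, for all large $n$ both $a_n,a_{n+1}$ lie in $(\alpha,\alpha+\delta)$, whence $\tfrac12\{a_n+a_{n+1}\}<\alpha+\delta$; the conclusion of the respective condition then forces $a_{n+1}\le\alpha$, $a_{n+2}\le\alpha$, or $d(x_{n+1},x_{n+2})=a_{n+1}\le\alpha$, contradicting $a_{n+1}>\alpha$.

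For (i)$\Rightarrow$(ii) and (i)$\Rightarrow$(iii) the device is to shrink $\delta$ past finitely many obstructions: with $\alpha=0$ and $\varepsilon>0$ fixed, only finitely many indices $i$ satisfy $a_{i+1}>\varepsilon$ (respectively $a_{i+2}>\varepsilon$), and for each such $i$ one has $\tfrac12\{a_i+a_{i+1}\}\ge a_{i+1}>\varepsilon$; choosing $\delta$ smaller than the minimum of $\tfrac12\{a_i+a_{i+1}\}-\varepsilon$ over this finite set makes the hypothesis fail for every exceptional $i$, while for all remaining $i$ the conclusion holds outright. This already yields (i)$\Leftrightarrow$(ii)$\Leftrightarrow$(iii).

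For the statements with general indices I would again start from $\alpha=0$, so $d(x_{i+1},x_{j+1})\le a_{\min(i,j)}\to 0$ and $(x_n)$ is Cauchy; when $X$ is complete it converges to a fixed point $z$ (by the argument establishing the fixed point in Theorem \ref{2.1}, valid under \eqref{CM}), and then \eqref{CM} gives $d(x_{m+1},z)<\tfrac12 a_m$ for every $m$. Given an admissible $\varepsilon$: if $\varepsilon>a_1$ the conclusion $d(x_{i+1},x_{j+1})\le a_1<\varepsilon$ is automatic; otherwise, with $N:=\max\{k:a_k>\varepsilon\}$, every pair with $\min(i,j)\ge N+1$ already satisfies $d(x_{i+1},x_{j+1})\le a_{N+1}\le\varepsilon$, so I reduce to the finitely many $i\le N$. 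For each such $i$ I split on the size of $a_i$: if $a_i>2\varepsilon$ then $\tfrac12\{a_i+a_j\}>\tfrac12 a_i>\varepsilon$ for all $j$, so a small $\delta$ defeats the hypothesis; if $\varepsilon<a_i<2\varepsilon$ then $\tfrac12\{a_i+a_j\}>\varepsilon$ forces $a_j>2\varepsilon-a_i>0$, i.e.\ $j$ lies in a finite set, and on those finitely many bad pairs $\tfrac12\{a_i+a_j\}>d(x_{i+1},x_{j+1})>\varepsilon$, so shrinking $\delta$ works again; and if $a_i=2\varepsilon$ I invoke convergence of the tail, $d(x_{i+1},x_{j+1})\to d(x_{i+1},z)<\tfrac12 a_i=\varepsilon$, so $d(x_{i+1},x_{j+1})>\varepsilon$ for only finitely many $j$, handled once more by shrinking $\delta$. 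Taking $\delta$ to be the minimum of the finitely many positive numbers so produced yields (v), and the same reasoning (now with $\varepsilon$ avoiding the values $a_k$) yields (iv); since (v)$\Rightarrow$(iv) trivially and (iv)$\Rightarrow$(i) was handled in the first paragraph, the chain closes.

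The step I expect to be the real obstacle is the borderline case $a_i=2\varepsilon$ above: there the metric estimate $d(x_{i+1},x_{j+1})\le\tfrac12\{a_i+a_j\}$ only delivers $d(x_{i+1},x_{j+1})<\varepsilon+\tfrac12 a_j$, which is not $\le\varepsilon$, and no manipulation using \eqref{CM} alone improves it — one genuinely needs the Picard tail to converge so that $d(x_{i+1},x_{j+1})$ is eventually pushed strictly below $\tfrac12 a_i$. This is exactly why completeness is what brings (v) into the list, and why in (i)--(iv) the restriction $\varepsilon\notin\{d(T^kx,T^{k+1}x)\}$ is essential: these are precisely the values at which the configuration $a_i=2\varepsilon$ with $d(x_{i+1},x_{j+1})\searrow\varepsilon$ along the tail would otherwise be unavoidable. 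I would therefore be careful to isolate this sub-case and treat it only after the convergence of Picard sequences has been secured.
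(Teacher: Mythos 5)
Your architecture (all equivalences routed through (i), finite-exception arguments with a shrunken $\delta$) is sound and genuinely different from the paper's, which instead isolates an abstract lemma about strictly decreasing sequences $(a_n)$ and phrases each condition purely in terms of $a_n$ (so that, e.g., its version of (iv) concludes $a_m+a_n\le\varepsilon$ rather than $d(T^{m+1}x,T^{n+1}x)\le\varepsilon$), then appeals to Theorem \ref{3.1} to bring (v) back in under completeness. Your treatments of (i)$\Leftrightarrow$(ii)$\Leftrightarrow$(iii), of all four implications into (i), and of (i)$\Rightarrow$(v) under completeness are correct (two harmless slips: the threshold case $\varepsilon=a_1$ should be folded into the "automatic" branch, and in the sub-case $\varepsilon<a_i<2\varepsilon$ the bad pairs need not satisfy $d(x_{i+1},x_{j+1})>\varepsilon$ — but you only use $\tfrac12(a_i+a_j)>\varepsilon$ there, which does hold).

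The genuine gap is in (i)$\Rightarrow$(iv), and it is a factor-of-two error that your own analysis makes visible. You correctly identify that the only configuration requiring convergence of the Picard tail is $a_i=2\varepsilon$, i.e.\ $\varepsilon=\tfrac12 d(T^ix,T^{i+1}x)$: there $\tfrac12(a_i+a_j)\downarrow\varepsilon$ from above as $j\to\infty$, so no finite $\delta$-shrinking can defeat the hypothesis and one must verify the conclusion along the whole tail. But the admissibility restriction in (iv) excludes $\varepsilon\in\{d(T^kx,T^{k+1}x)\}=\{a_k\}$, not $\varepsilon\in\{\tfrac12 a_k\}$; these sets are different in general, so there are admissible $\varepsilon$ (namely $\varepsilon=\tfrac12 a_{i_0}\notin\{a_k\}$) for which your argument falls into the borderline case, and your only tool there is convergence to the fixed point $z$ — which is exactly what (iv) is not allowed to assume, since completeness is reserved for (v). Your closing remark that the excluded values "are precisely the values at which the configuration $a_i=2\varepsilon$ \dots would otherwise be unavoidable" is therefore false as written. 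To repair the proof you would either need to exclude $\varepsilon\in\{\tfrac12 d(T^kx,T^{k+1}x)\}$ (which is what the halved sum actually demands, and is arguably the set the paper intends, since its lemma works with the unhalved quantity $a_m+a_n$, whose tail limits are the $a_k$ themselves), or supply an argument, valid without completeness, that $\limsup_j d(T^{i_0+1}x,T^{j+1}x)\le\tfrac12 a_{i_0}$ cannot be attained strictly from above along the tail — and \eqref{CM} alone does not give this.
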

This proposition follows from the following lemma.
\begin{lem}
\label{lem1}
Let $(a_n)_{n\in\mathbb{N}}$ be a sequence satisfying
\[ a_1>a_2>\cdots>a_n>a_{n+1}>\cdots\ge 0. \]
And let $\alpha\ge 0$ be the limit of $a_n$.
Then, the following (i)-(iv) are equivalent, and (v) is strictly stronger than them.
\begin{enumerate}
\item[(i)] $\alpha=0.$
\item[(ii)] For any $k\ge0$ and $\varepsilon>0$, there exists $\delta>0$ such that $a_n<\varepsilon+\delta$ implies $a_{n+k}\le \varepsilon$ for all $n\in\mathbb{N}$.
\item[(iii)] For any $k\ge1$ and $\varepsilon>0$, there exists $\delta>0$ such that $\frac{1}{2}(a_n+a_{n+1})<\varepsilon+\delta$ implies $a_{n+k}\le \varepsilon$ for all $n\in\mathbb{N}$.
\item[(iv)] For any $\varepsilon\in (0,\infty)\backslash \{a_k : k=1,2,\cdots\}$, there exists $\delta>0$ such that $a_m+a_n<\varepsilon+\delta$ implies $a_m+a_n\le \varepsilon$ for all $m,n\in\mathbb{N}$.
\item[(v)] For any $\varepsilon>0$, there exists $\delta>0$ such that $a_m+a_n<\varepsilon+\delta$ implies $a_m+a_n\le \varepsilon$ for all $m,n\in\mathbb{N}$.
\end{enumerate}
\end{lem}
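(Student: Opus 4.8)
The plan is to establish the cycle (i)$\Rightarrow$(ii)$\Rightarrow$(iii)$\Rightarrow$(i), then the equivalence (i)$\Leftrightarrow$(iv) on its own, and finally to handle (v); the cycle together with (i)$\Leftrightarrow$(iv) yields the equivalence of (i)--(iv). Throughout, the only facts about $(a_n)$ I will use are that it is strictly decreasing with $a_n\to\alpha$, whence $a_n>\alpha\ge 0$ for every $n$ and $a_m+a_n>2\alpha$ for all $m,n$.

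For (i)$\Rightarrow$(ii) I would fix $k\ge 0$ and $\varepsilon>0$ and observe that $S:=\{n:a_{n+k}>\varepsilon\}$ is finite because $a_n\to 0$; by monotonicity $a_n>\varepsilon$ whenever $n\in S$, so setting $\delta:=\min_{n\in S}(a_n-\varepsilon)>0$ (or $\delta:=1$ if $S=\emptyset$) makes the implication hold, since $a_n<\varepsilon+\delta$ forces $n\notin S$. For (ii)$\Rightarrow$(iii), given $k\ge 1$ and $\varepsilon>0$ I would take the $\delta$ that (ii) supplies for the exponent $k-1$; because $a_{n+1}<\tfrac12(a_n+a_{n+1})$, the hypothesis of (iii) gives $a_{n+1}<\varepsilon+\delta$, and (ii) applied at the index $n+1$ then yields $a_{n+k}\le\varepsilon$. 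For (iii)$\Rightarrow$(i) I argue by contraposition: if $\alpha>0$, then $\varepsilon=\alpha$ and $k=1$ defeat (iii), since $\tfrac12(a_n+a_{n+1})\to\alpha$ while $a_{n+1}>\alpha$ for every $n$.

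The substantive step is (i)$\Rightarrow$(iv). Assuming $\alpha=0$, I fix $\varepsilon\in(0,\infty)\setminus\{a_k:k\ge 1\}$ and set $I:=\inf\{a_m+a_n:a_m+a_n>\varepsilon\}$ (with $\inf\emptyset:=+\infty$); it will suffice to prove $I>\varepsilon$, because then $\delta:=I-\varepsilon$ (or $\delta:=1$ when the set is empty) works. To get a contradiction from $I=\varepsilon$, I pick pairs $(m_j,n_j)$ with $m_j\le n_j$, $a_{m_j}+a_{n_j}>\varepsilon$, and $a_{m_j}+a_{n_j}\to\varepsilon$; then $a_{m_j}\ge\tfrac12(a_{m_j}+a_{n_j})>\varepsilon/2$, so — since $a_n\to 0$ — the indices $m_j$ take only finitely many values and may be assumed constant, $m_j\equiv m^\ast$. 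Consequently $a_{n_j}\to\varepsilon-a_{m^\ast}$, and since $(a_{n_j})$ is a subsequence of the strictly decreasing null sequence $(a_n)$ its limit is either $0$ (if $(n_j)$ is unbounded) or some $a_{n^\ast}$ (if $(n_j)$ is bounded). In the first case $\varepsilon=a_{m^\ast}$, contradicting $\varepsilon\notin\{a_k\}$; in the second $\varepsilon=a_{m^\ast}+a_{n^\ast}$, so $a_{m_j}+a_{n_j}=\varepsilon$ along a subsequence, contradicting $a_{m_j}+a_{n_j}>\varepsilon$. For (iv)$\Rightarrow$(i), again by contraposition: if $\alpha>0$ then $\varepsilon:=a_1+\alpha$ lies in $(0,\infty)\setminus\{a_k\}$ because $a_1+\alpha>a_1=\max_k a_k$, and $a_1+a_n\to\varepsilon$ with $a_1+a_n>\varepsilon$, so no $\delta$ can work and (iv) fails.

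It remains to compare (v). Since (v) is the very assertion of (iv) but with $\varepsilon$ ranging over the larger set $(0,\infty)$, it trivially implies (iv) and hence all of (i)--(iv). Strictness is witnessed by any strictly decreasing null sequence, e.g.\ $a_n=1/n$: it satisfies (i)--(iv), yet (v) fails at $\varepsilon=a_1$ because $a_1+a_n\to a_1$ from above, so $\{a_m+a_n\}$ meets $(a_1,a_1+\delta)$ for every $\delta>0$ (in fact this remark shows (v) fails for every admissible sequence, $a_1+a_n\to a_1+\alpha$ always being a right-accumulation point of $\{a_m+a_n\}$). I expect the one genuinely delicate point to be the inequality $I>\varepsilon$ in (i)$\Rightarrow$(iv): it requires squeezing a compactness argument out of the null sequence to localize the limiting pair of indices, and the exclusion $\varepsilon\notin\{a_k\}$ is exactly what kills the degenerate possibility $\varepsilon=a_{m^\ast}$.
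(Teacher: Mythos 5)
Your proof is correct, and its overall strategy coincides with the paper's: the equivalence of (i)--(iii) by elementary monotonicity, the finiteness/localization of ``bad'' index pairs for (i)$\Rightarrow$(iv), and the observation that $a_{n_0}+a_m$ accumulates at $a_{n_0}+\alpha$ from above to defeat (v). The differences are organizational but worth noting. You close (i)--(iii) by a cycle rather than two separate equivalences; that is immaterial. In (i)$\Rightarrow$(iv) you work with $I=\inf\{a_m+a_n:a_m+a_n>\varepsilon\}$ and extract a limiting pair $(m^\ast,n^\ast)$ by a subsequence argument, whereas the paper exhibits $\delta$ explicitly as a minimum over the finitely many pairs with $n_0\le n<m<n_1$; your version is uniform over \emph{all} pairs (in particular it covers the pairs with small first index, where $a_n$ may exceed $\varepsilon$, which the paper's explicit construction does not address separately), and it isolates cleanly where the hypothesis $\varepsilon\notin\{a_k\}$ is used, namely to exclude the degenerate limit $\varepsilon=a_{m^\ast}$. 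In (iv)$\Rightarrow$(i) you take $\varepsilon=a_1+\alpha$, which is automatically outside $\{a_k\}$, instead of the paper's diagonal specialization $m=n$ with $\varepsilon=2\alpha$, which forces a case split on whether $\alpha\in\{a_k/2\}$; your choice avoids that case analysis entirely. The treatment of (v) is the same in substance, and your parenthetical remark that (v) in fact fails for \emph{every} strictly decreasing sequence (since $a_1+a_n\downarrow a_1+\alpha$) is a slight sharpening of the paper's statement.
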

\begin{proof}
We denote (ii) for each $k\ge 0$ as (ii)$_k$.

((i)$\Rightarrow$(ii)) We show (i)$\Rightarrow$(ii)$_0$, since (ii)$_k \Rightarrow $(ii)$_{k+1}$ is obvious. 
In the case of $\varepsilon\ge a_1$, 
we have $a_{n+k}\le \varepsilon$ for all $n\in\mathbb{N}$, then (ii) holds. In the case of $\varepsilon\in (0,a_1)$, there exists $n_0\in\mathbb{N}$ such that 
\[ \cdots>a_{n_0-1}>\varepsilon\ge a_{n_0}>a_{n_0+1}>\cdots. \]
Therefore, we can take $\delta>0$ to satisfy 
\[ \cdots>a_{n_0-1}>\varepsilon+\delta>\varepsilon\ge a_{n_0}>a_{n_0+1}>\cdots, \]
so (ii)$_0$ holds.

((ii)$\Rightarrow$(i)) For fixed $k\ge 0$, we show that (ii)$_k \Rightarrow $(i). Assume $\alpha>0$.
We take $\varepsilon=\alpha$, then there exists $\delta>0$ of (ii)$_k$.
From $a_n\downarrow \varepsilon$ by assumption, $a_n>\varepsilon$ holds for any $n\in\mathbb{N}$, and there exists $n_0\in\mathbb{N}$ such that $a_{n_0}<\varepsilon+\delta$.
Therefore, from (ii)$_k$, we have $a_{n_0+k}\le \varepsilon$, which is a contradiction.

((ii)$\Leftrightarrow$(iii)) It is obvious from the following.
\[ \cdots>a_{n-1}>\frac{a_{n-1}+a_n}{2}>a_n>\frac{a_n+a_{n+1}}{2}>a_{n+1}>\cdots. \]

((i)$\Rightarrow$(iv)) Let $\varepsilon\in (0,\infty)\backslash \{a_k : k=1,2,\cdots\}$.
Without loss of generality, we assume $n<m$.
By (i), there exists $n_0\in\mathbb{N}$ such that 
\[ \varepsilon>a_{n_0}>a_{n_0+1}>\cdots. \]
We take $\delta_1>0$ such that $\varepsilon>a_{n_0}+\delta_1$.
By (i), there exists $n_1>n_0$ such that 
\[ 0<a_m<\delta_1 \]
holds for all $m\ge n_1$.
Then for all $n_0\le n<n_1\le m$, we have 
\[ a_m+a_n\le \delta_1+a_{n_0}<\varepsilon. \]
We note that there are only a finite number of pairs $(n,m)\in \mathbb{N}^2$ satisfying $n_0\le n<m<n_1$. Therefore, we define $\delta>0$ as follows.
\begin{align*}
\delta&=\min\{\delta_1, \min_{m,n}\delta_{m,n}\}, \\
\delta_{m,n}&=
\begin{cases}
a_n+a_m-\varepsilon, & (a_n+a_m>\varepsilon) \\
a_1, & (a_n+a_m\le \varepsilon).
\end{cases}
\end{align*}
Then if $a_m+a_n<\varepsilon+\delta$, we have $a_m+a_n\le \varepsilon$. So (iv) holds.

((iv)$\Rightarrow$(i)) Assume $\alpha>0$.
We use (iv) as $m=n$, that is, for any $\varepsilon\in (0,\infty)\backslash \{\frac{a_k}{2} : k=1,2,\cdots\}$, there exists $\delta>0$ such that $a_n<\varepsilon+\delta$ implies $a_n\le \varepsilon$ for all $n\in\mathbb{N}$.

Case 1: If $\alpha\in (0,\infty)\backslash \{\frac{a_k}{2} : k=1,2,\cdots\}$, we can take $\varepsilon=\alpha$, then there exists $\delta>0$ of (iv).
From $a_n\downarrow \varepsilon$ by assumption, $a_n>\varepsilon$ holds for any $n\in\mathbb{N}$, and there exists $n_0\in\mathbb{N}$ such that $a_{n_0}<\varepsilon+\delta$.
Therefore, from (iv), we have $a_{n_0+1}\le \varepsilon$, which is a contradiction. 

Case 2: If $\alpha=a_{k_0}$ for some $k_0$, we have $a_{k_0+1}<\alpha$, which contradicts $a_n\downarrow \alpha$.

((v) is strictly stronger than (i)) Since (v)$\Rightarrow$(iv) is obvious, (v)$\Rightarrow $(i) holds.
However, (i)$\Rightarrow$(v) does not hold.
In fact, if we take $\varepsilon=a_{n_0}$, then for any $\delta>0$ there exists $0<a_m<\delta$ from (i). 
This implies $\varepsilon<a_{n_0}+a_m<\varepsilon+\delta$. So (v) does not hold.
\end{proof}
\begin{proof}[Proof of Proposition \ref{5.1}]
By setting $a_n=d(T^nx,T^{n+1}x)$, it follows from Lemma \ref{lem1} that conditions (i)-(iv) in Proposition \ref{5.1} are equivalent. 
If $(X,d)$ is complete, (v) is also equivalent by Theorem \ref{3.1}.
\end{proof}
From Proposition \ref{5.1}, we can rewrite Theorem \ref{3.1} as follows.
\begin{theorem}
\label{5.2}
Let $(X,d)$ be a complete metric space and suppose that the mapping $T: X\to X$ satisfies the following:
\begin{align}
x\not=y \ \text{implies} \ d(Tx,Ty)<\frac{1}{2}\{d(x,Tx)+d(y,Ty)\}.
\tag{CM}
\end{align}
Then, the following are equivalent.
\begin{enumerate}
\item[(i)] $\displaystyle \lim_{n\to\infty}d(T^nx,T^{n+1}x)=0$ for any $x\in X$.
\item[(ii)] $T$ has a unique fixed point $z\in X$ and $\{T^nx\}$ converges to $z$ for any $x\in X$.
\end{enumerate}
\end{theorem}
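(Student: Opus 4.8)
The plan is to obtain Theorem~\ref{5.2} as a reformulation of Theorem~\ref{3.1} through Proposition~\ref{5.1}. The implication (ii)$\Rightarrow$(i) is immediate: if $z=Tz$ and $T^nx\to z$, then $d(T^nx,T^{n+1}x)\le d(T^nx,z)+d(z,T^{n+1}x)\to 0$, using only the triangle inequality.

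For (i)$\Rightarrow$(ii) I would first give the direct argument, which makes transparent why the hypothesis is exactly what is needed. Inspecting the proof of Theorem~\ref{3.1}, condition (i) of that theorem is invoked in precisely one place: to rule out a strictly positive limit of $(d(x_n,x_{n+1}))_n$; the Cauchy property, the appeal to completeness, the identification and uniqueness of the fixed point, and the convergence of every Picard sequence all use only \eqref{CM}. So under the present hypothesis, which directly supplies $\lim_n d(T^nx,T^{n+1}x)=0$ for every $x$, I would rerun that proof verbatim: fix $x_0$, set $x_n=T^nx_0$; if the sequence is eventually constant its value is a fixed point, otherwise \eqref{CM} forces $(d(x_n,x_{n+1}))_n$ to be strictly decreasing while the hypothesis gives $d(x_n,x_{n+1})\to 0$; then for $n,m\ge 2$,
\[ d(x_n,x_m)=d(Tx_{n-1},Tx_{m-1})\le\tfrac12\{d(x_{n-1},x_n)+d(x_{m-1},x_m)\}\to 0, \]
so $(x_n)$ is Cauchy, its limit $u$ is a fixed point by the estimate already used in Theorem~\ref{2.1}, and uniqueness follows from \eqref{CM}. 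Running the same argument from an arbitrary starting point gives convergence of every Picard sequence to the unique fixed point.

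Alternatively, and more in the spirit of the placement after Proposition~\ref{5.1}, I would argue formally: condition (i) here is the assertion that the limit $\alpha$ of $(d(T^nx,T^{n+1}x))_n$ vanishes for each $x\in X$, which by Proposition~\ref{5.1} (the equivalence (i)$\Leftrightarrow$(v), available since $(X,d)$ is complete) is equivalent to condition (v) of that proposition holding for every $x$, and that is condition (i) of Theorem~\ref{3.1}; the conclusion (ii) then follows from Theorem~\ref{3.1}. I do not anticipate a real obstacle here. The one item meriting a remark is the harmless difference in index ranges --- $i,j\in\mathbb N$ in Proposition~\ref{5.1}(v) versus $i,j\in\mathbb N\cup\{0\}$ in Theorem~\ref{3.1}(i) --- which is either absorbed by treating $d(x,Tx)=d(T^0x,T^1x)$ on the same footing as the other terms (the case $i=j=0$ being trivial since $d(Tx,Tx)=0$), or, most cleanly, simply sidestepped by invoking the direct argument of the previous paragraph, which never refers to that condition at all.
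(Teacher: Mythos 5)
Your proposal is correct and, in its second route, is exactly the paper's argument: the paper justifies Theorem~\ref{5.2} with the single remark that Proposition~\ref{5.1} (specifically the equivalence of $\alpha=0$ with condition (v) under completeness) lets one replace condition (i) of Theorem~\ref{3.1} by the vanishing of $\lim_n d(T^nx,T^{n+1}x)$ for every $x$. Your additional direct argument --- rerunning the proof of Theorem~\ref{3.1} with the hypothesis supplying $d(x_n,x_{n+1})\to 0$ outright, and getting (ii)$\Rightarrow$(i) from the triangle inequality --- is a sound and arguably cleaner self-contained alternative, and your remark about the $\mathbb{N}$ versus $\mathbb{N}\cup\{0\}$ index range is a legitimate (harmless) discrepancy that the paper glosses over.
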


\section*{Declarations}

\subsection*{Conflicts of interests}
The authors declare that there is no conflict of interest regarding the publication of this paper.

\subsection*{Data Availability Statements}
Data sharing is not applicable to this article as no datasets were generated or analyzed during the current study.

\end{document}